\numberwithin{equation}{section}
\newcommand{\bbR}{\mathbb R}
\newcommand{\bbN}{\mathbb N}
\renewcommand{\epsilon}{\varepsilon}
\newcommand{\be}{\begin{equation}}
\newcommand{\ee}{\end{equation}}
\newcommand{\spec}{\mathrm{spec}}
\newcommand{\N}{\mathbb{N}}
\newcommand{\R}{\mathbb{R}}
\newcommand{\Z}{\mathbb{Z}}
\newcommand{\cH}{{\mathcal H}}
\newcommand{\tr}{\mathrm{tr}}
\newcommand{\abs}[1]{\left\vert#1\right\vert}
\newcommand{\la}{\lambda}
\newcommand{\de}{\delta}
\newcommand{\eps}{\varepsilon}
\newtheorem{introtheorem}{Theorem}{\bf}{\it}
\newtheorem{theorem}{Theorem}[section]
\newtheorem{lemma}[theorem]{Lemma}
\newtheorem{corollary}[theorem]{Corollary}
\newtheorem{hypothesis}[theorem]{Hypothesis}
\begin{document}
\title [On the weak and ergodic limit of the spectral shift function]
{On the weak and ergodic limit of the spectral shift function}

\author{V. Borovyk }
\address{
 Department of Mathematics, University of Arizona, PO Box 210089,
Tucson, AZ  85721–0089, USA } \email{borovykv@math.arizona.edu}

\author{K. A. Makarov}
\address{
Department of Mathematics, University of Missouri, Columbia, Missouri 63211, USA}
\email{makarovk@missouri.edu}

\subjclass{Primary: 81Q10, Secondary: 35P20, 47N50}

\keywords{Spectral shift function, scattering phase, Jost function,
  Schr\"odinger operators}

\begin{abstract}
We discuss convergence properties of the  spectral
shift functions associated with a pair of Schr\"odinger operators with
 Dirichlet boundary  conditions at
 the end points of a finite interval
$(0, r)$ as the  length of interval approaches infinity. 

\end{abstract}

 \maketitle


\section{Introduction}

In this note we  study the relationship between the 
spectral shift function $\xi$ associated with the pair $(H_0, H)$ of 
the half-line Dirichlet  Schr\"{o}dinger operators $$H_0=-\frac {d^2} {dx^2}\quad \text{and} \quad 
 H=-\frac {d^2} {dx^2} + V(x)\quad \text{ in } \,\,L^2(0,\infty)$$ 
and the spectral shift function $\xi^r$ associated with 
the pair $(H^r_0, H^r)$ of the corresponding 
 Schr\"odinger operators 
 on the finite interval $(0,r)$ with  the Dirichlet boundary  conditions at
 the end points of the interval.

Recall that given a pair of 
 self-adjoint operators $(H_0,H)$  in a separable Hilbert space $\cH$ 
 such that the  difference  
$                               
(H +iI)^{-1} - (H_0 +iI)^{-1}
$
is a trace class operator, 
 the spectral shift function  $\xi$ 
associated with the pair $(H_0,H)$ is uniquely determined (up to an additive constant) 
 by the trace formula
\begin{equation}                                \label{trace_formula}
\tr(f(H) - f(H_0)) = \int_{\R} \xi(\la) f'(\la)d\la,
\end{equation}
valid for a wide class of functions $f$ (\cite{BiPu},  \cite{BiYa}, \cite{Kr}, \cite{Kr2}) \cite{Ya}).  
 
In the case where both $H_0$ and $H$ are
bounded from below, the standard way to fix the undetermined constant is to
require that
\begin{equation}\label{norma}
\xi (\la) = 0\quad \text{ for }\quad\la < \inf\{\spec(H_0)\cup \spec(H)\}.
\end{equation}

Under  the short range hypothesis on the potential $V$  that \begin{equation}													\label{first_moment}
\int_0^{\infty}(1 + x) \abs{V(x)}dx < \infty,
\end{equation}
 the pairs of the half-line Schr\"odinger operators $(H_0,H)$
 and their finite-interval box-appro\-ximations $(H_0^r, H^r)$ are resolvent
 comparable 
 and the spectral shift functions $\xi$ and  $\xi^r$ 
associated with the pairs $(H_0, H)$ and $(H_0^r, H^r)$
satisfying the normalization condition \eqref{norma} are well-defined. The specifics of 
the one-dimensional case is that each of 
the spectral shift functions $\xi$ and $\xi^r$
 admits   
 a unique  
 left-continuous representative,  for which we will 
keep  the same notation. 

The main result of this note is the following theorem.

\begin{introtheorem}\label{main_theorem_0}
Assume  that a real-valued function $V$ on $(0,\infty)$ satisfies
condition \eqref{first_moment}. Denote by 
  $\xi$ and $\xi^r$  the left-continuous 
spectral shift functions associated with the pairs
$(H_0, H)$  and $(H_0^r, H^r)$
of  the Dirichlet Schr\"{o}dinger operators on the semi-axis $(0,\infty)$ and  
 on the finite interval  $(0, r)$, respectively.

\begin{itemize}
\item[]
Then  for any continuous function $g$ on $\bbR$
 with compact support  both
 the (weak) limit 
\begin{equation}                                \label{xi_weak_conv}
\lim_{r\rightarrow\infty}\int_{-\infty}^\infty \xi^r( \la)g(\la)d\la =
\int_{-\infty}^\infty\xi(\la)g(\la)d\la
\end{equation}
and the Ces\`aro limit 
\item[] 
\begin{equation}                                \label{main_equation_1}
\lim_{r\to \infty} \frac 1 r \int_0^r\xi^r(\la) dr =
\xi\big(\la\big), \quad \la \in \R\setminus (\spec_{d}(H)\cup \{0\})
\end{equation}
\end{itemize}
exist. 

If, in addition, the Schr\"odinger operator  $H=-\frac {d^2} {dx^2} + V(x)$  has  no zero energy resonance,
then  convergence \eqref{main_equation_1} takes place for $\la = 0$ as well.
\end{introtheorem}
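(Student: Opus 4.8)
The plan is to reduce both assertions to the following two explicit formulas for the spectral shift functions in terms of scattering data.  For the half-line operator, under hypothesis \eqref{first_moment} one has on the positive half-axis the classical identity
\[
\xi(\la) = -\frac{1}{\pi}\,\delta(\sqrt{\la}\,), \qquad \la>0,
\]
where $\delta(k)$ is the (continuous, normalized) scattering phase, i.e. the argument of the Jost function $\mathfrak{f}(k)=\abs{\mathfrak{f}(k)}e^{-\ri\delta(k)}$, while for $\la<0$ the function $\xi(\la)$ is piecewise constant and counts the number of eigenvalues of $H$ below $\la$.  For the box operator $H^r$ one writes $\xi^r(\la)$ as a staircase function jumping by $1$ at each Dirichlet eigenvalue of $H^r$ and by $-1$ at each Dirichlet eigenvalue of $H_0^r=-d^2/dx^2$ on $(0,r)$; equivalently $\xi^r(\la)= N_{H^r}(\la)-N_{H_0^r}(\la)$ where $N_{\bullet}(\la)$ is the eigenvalue counting function.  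The first step, then, is to recall (or record as a lemma) these two representations; they are standard but I would state them carefully because the normalization \eqref{norma} and the left-continuity convention must be tracked.

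The second step is the asymptotic analysis of $N_{H^r}(\la)-N_{H_0^r}(\la)$ as $r\to\infty$, which is where the scattering phase re-enters.  On the continuous spectrum $\la=k^2>0$ the Dirichlet eigenvalues of $H^r$ are, up to $o(1)$ as $r\to\infty$, the solutions of a quantization condition of Bohr--Sommerfeld type, $k r + \delta(k) \in \pi\Z + o(1)$, while those of $H_0^r$ solve $kr\in\pi\Z$.  Subtracting, the number of box-eigenvalues of $H^r$ in $(0,\la)$ exceeds that of $H_0^r$ by $\frac{1}{\pi}\bigl(\delta(\sqrt\la)-\delta(0^+)\bigr)$ plus a bounded oscillating error of size $O(1)$ that does not converge pointwise but has mean zero.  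This immediately gives the Ces\`aro statement \eqref{main_equation_1}: averaging $\xi^r(\la)$ over $r\in(0,R)$ kills the oscillating $O(1)$ term and leaves exactly $-\frac1\pi\delta(\sqrt\la)$ adjusted by the contribution of the negative eigenvalues, which for $r$ large is precisely $-N_H(\la)$ for $\la<0$ and stabilizes the constant on $(0,\infty)$ so that the limit equals $\xi(\la)$ for $\la\notin \spec_d(H)\cup\{0\}$.  The exclusion of $\spec_d(H)$ is forced because near a negative eigenvalue the staircase $\xi^r$ has a jump that the averaging cannot smooth out, and the exclusion of $\{0\}$ reflects the possible threshold (resonance) singularity of $\delta$ at $k=0$.

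For the weak limit \eqref{xi_weak_conv}, rather than controlling $\xi^r$ pointwise I would argue by a density/uniform-bound argument: show that the family $\{\xi^r\}_{r\ge 1}$ is uniformly bounded on compact subsets of $\R\setminus\{0\}$ and has uniformly bounded variation there (each is a difference of two monotone counting functions whose jump counts are controlled by Levinson-type bounds depending only on $\int(1+x)\abs V$), so that by Helly's selection theorem every subsequence has a further subsequence converging in $L^1_{\mathrm{loc}}$ off $0$; then identify the limit using the pointwise Ces\`aro identification of the previous step together with dominated convergence, and note that the single point $\la=0$ carries no mass against a continuous $g$.  The last sentence of the theorem — convergence at $\la=0$ under absence of a zero-energy resonance — then follows because in that case the Jost function satisfies $\mathfrak f(0)\neq 0$, so $\delta$ extends continuously to $k=0$ with $\delta(0)=0$ (no half-bound state shifting the phase by $\pi/2$), and the quantization analysis of step two is valid uniformly down to $\la=0$; hence the Ces\`aro mean of $\xi^r(0)$ converges to $\xi(0)=-\tfrac1\pi\delta(0)=0$ (corrected by $-N_H(0)=-N_H(0^-)$ if $0$ is not an eigenvalue, which it is not when there is no resonance and no bound state at $0$).

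The main obstacle is step two: making the Bohr--Sommerfeld quantization rigorous with effective $o(1)$ error terms under only the first-moment condition \eqref{first_moment} rather than a smoothness or decay assumption, and in particular controlling the phase $\delta(k)$ and the eigenvalue spacing uniformly on compact $k$-intervals bounded away from $0$.  I expect this to be handled by working with the Jost solution and the Prüfer-angle representation of solutions on $(0,r)$, estimating the difference between the Prüfer angle for $H$ and for $H_0$ via a Volterra integral equation whose kernel is controlled by $\int_0^\infty\abs{V}$, and then counting lattice points for the quantization condition; the threshold behavior at $\la=0$ is exactly the place where these estimates degrade unless the no-resonance hypothesis is imposed, which is why the final clause of the theorem is stated separately.
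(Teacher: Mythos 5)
Your outline follows the same overall strategy as the paper (phase--shift representation of $\xi$ and $\xi^r$, oscillation--theorem quantization, averaging of the resulting integer--part oscillation), but two points need attention. First, the step you flag as the main obstacle is not needed in asymptotic form: by the Sturm oscillation theorem the quantization is \emph{exact}, namely $N^r(\la)=\left[\pi^{-1}r\sqrt\la+\pi^{-1}\de^r(\sqrt\la)\right]$ and $N_0^r(\la)=\left[\pi^{-1}r\sqrt\la\right]$, where $\de^r$ is the phase shift of the truncated potential $V^r$, and the variable--phase equation gives $|\de^r(\sqrt\la)-\de(\sqrt\la)|\le\la^{-1/2}\int_r^\infty|V(x)|\,dx$; so no $o(1)$ error ever enters the integer part. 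Your assertion that the resulting bounded oscillating term ``has mean zero'' is the actual content that must be proved; for the Ces\`aro limit at fixed $\la>0$ this is exactly Corollary~\ref{ergodic_cor} (the mean of $[x+h(x)]-[x]$ converges to $\lim_{x\to\infty}h(x)$), applied after the change of variables $x=\pi^{-1}r\sqrt\la$. (Also note the sign convention: with the trace--formula normalization one has $\xi^r=N_0^r-N^r$, not $N^r-N_0^r$.)

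The genuine gap is in your argument for the weak limit \eqref{xi_weak_conv}. The Helly/bounded--variation route cannot work: on a compact interval $[a,b]\subset(0,\infty)$ the function $\xi^r$ jumps by $+1$ at each Dirichlet eigenvalue of $H_0^r$ and by $-1$ at each eigenvalue of $H^r$ lying in $[a,b]$, and the number of such eigenvalues grows like $\pi^{-1}r(\sqrt b-\sqrt a)$, so the total variation of $\xi^r$ on $[a,b]$ is unbounded in $r$ (Bargmann/Levinson--type bounds control only the number of \emph{negative} eigenvalues). Worse, the conclusion you want to extract --- subsequential $L^1_{\mathrm{loc}}$ convergence to $\xi$ off $0$ --- is false: $\xi^r$ is integer valued while $\xi(\la)=-\pi^{-1}\de(\sqrt\la)$ is generically not an integer, so no subsequence can converge to $\xi$ in $L^1_{\mathrm{loc}}$, since that would force a.e.\ convergence along a further subsequence. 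On $(0,\infty)$ the convergence is only weak and comes from equidistribution of the oscillation in the $\la$ variable, not from compactness; this is precisely what Lemma~\ref{main_conv_res} supplies, namely $\int_0^1\left(\left[r_nx+f_n(x)\right]-\left[r_nx\right]\right)g(x)\,dx\to\int_0^1f(x)g(x)\,dx$ for $f_n=\pi^{-1}\de^{\pi r_n}$ converging to $f=\pi^{-1}\de$ uniformly on compact subsets of $(0,1]$ with an integrable majorant, the threshold $\la=0$ being handled by the majorant together with Levinson's theorem, while the negative half--axis uses eigenvalue convergence plus the Bargmann bound and dominated convergence. Finally, your last clause needs a correction: in the absence of a zero--energy resonance Levinson's theorem gives $\de(0)=\pi m$ with $m$ the number of bound states, so the Ces\`aro limit at $\la=0$ is $-\pi^{-1}\de(0)=-N(0)=\xi(0)$, which is not $0$ in general.
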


We remark that the  weak convergence result  \eqref{xi_weak_conv} 
 has been obtained in \cite{KoSc} in the  case of arbitrary dimension 
under the assumption that  the potential $V$ 
belongs to the Birman--Solomyak class $\ell^1(L^2)$.  In contrast to the Feymann-Kac path integration approach developed in
\cite{KoSc}, our approach is based on the study of
fine properties of the eigenvalue counting function available  in the one-dimensional case. 

We also remark that the  class of potentials satisfying condition \eqref{first_moment} 
is slightly different from the Birman--Solomyak class $\ell^1(L^2)$: the Birman--Solomyak condition
allows a slower decay at infinity, while condition \eqref{first_moment} admits
a $L^1$-type singularities at finite points in contrast to the fact that   only $L^2$-type
singularities are allowed in the Birman--Solomyak class.

\section{Some general convergence results}																	\label{prelim}

In this section we will develop the necessary analytic  background for proving the
convergence results   \eqref{xi_weak_conv}  and \eqref{main_equation_1}.

Assume the following hypothesis.

\begin{hypothesis}\label{h1} Assume that  $f$ is a  Riemann integrable
function on the interval $[0, 1]$ and
$g$ is a 
continuous function on  $[0,1]$.
Suppose that  
 a sequence of real-valued measurable functions $\{f_n\}_{n=1}^\infty$ on $[0, 1]$
  converges
to  $f$ pointwise on the open interval $(0, 1)$ and that the convergence 
is uniform on every compact set 
of the semi-open interval $(0, 1]$. Assume, in addition, that  
 the sequence $\{f_n\}_{n=1}^\infty$ has a $|g|$-integrable 
majorante $F$. That is, 
\begin{equation}\label{major}
|f_n(x)|\le F(x),\quad x\in [0, 1], \,\,  n\in \bbN,
\end{equation}
   and 
\begin{equation}\label{major1}
\int_0^1F(x)|g(x)|dx<\infty.
\end{equation} 
\end{hypothesis}

Our main technical result is the following lemma.

\begin{lemma}\label{main_conv_res} Assume Hypothesis \ref{h1}.	Suppose that $\{r_n\}_{n=1}^\infty$ 
 is a sequence of non-negative numbers such that 
$$\lim_{n\to \infty}r_n=\infty.$$
Then 
\begin{equation}\label{techres}
  \lim_{n\to \infty}\int_0^1 \widetilde f_n(x)g(x)dx=\int_0^1 f(x)g(x)dx, 
  \end{equation}
   where the sequence $\{\widetilde f_n\}_{n=1}^\infty$ is given by 
$$
\widetilde f_n(x)=[r_nx + f_n(x)]-[r_nx], \quad n\in \bbN,\quad x\in [0,1],
$$
and $[\cdot]$ stands
for the integer value function. 
\end{lemma}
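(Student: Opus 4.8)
The plan is to split $\widetilde f_n$ into a "main oscillating part" plus a small correction and show that the oscillating part equidistributes. Write $\{t\}=t-[t]$ for the fractional part, so that
\be\no
\widetilde f_n(x)=[r_nx+f_n(x)]-[r_nx]=f_n(x)+\{r_nx\}-\{r_nx+f_n(x)\}.
\ee
Thus $\widetilde f_n(x)=f_n(x)+\psi(r_nx)-\psi(r_nx+f_n(x))$, where $\psi(t)=\{t\}$ is the $1$-periodic sawtooth. Since $\psi$ has mean $\tfrac12$ over a period, the heuristic is that both $\psi(r_nx)$ and $\psi(r_nx+f_n(x))$ behave, against a fixed weight, like the constant $\tfrac12$ as $r_n\to\infty$, so their difference washes out and $\widetilde f_n\to f$ in the weak sense claimed. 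The correction terms controlled by the majorante $F$ and Hypothesis \ref{h1} handle the points near $0$ where nothing is assumed beyond pointwise convergence.

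First I would dispose of a neighborhood of the origin: fix $\eps>0$; by \eqref{major} and \eqref{major1} we have $\int_0^\delta |\widetilde f_n(x)||g(x)|\,dx\le \int_0^\delta F(x)|g(x)|\,dx<\eps$ uniformly in $n$ for $\delta$ small (note $|\widetilde f_n|\le |f_n|+1$, so after enlarging $F$ by $1$ we may assume $|\widetilde f_n|\le F$), and likewise for $f$ in place of $\widetilde f_n$ since $|f|\le F$ a.e. It therefore suffices to prove \eqref{techres} with $\int_0^1$ replaced by $\int_\delta^1$. On $[\delta,1]$ the convergence $f_n\to f$ is uniform, so $\int_\delta^1 f_n g\to\int_\delta^1 fg$, and it remains to show
\be\no
\int_\delta^1\big(\psi(r_nx)-\psi(r_nx+f_n(x))\big)g(x)\,dx\longrightarrow 0.
\ee
For the first term, $\int_\delta^1\psi(r_nx)g(x)\,dx\to\tfrac12\int_\delta^1 g(x)\,dx$: this is the classical Riemann–Lebesgue-type fact that $\psi(r_n\,\cdot\,)\rightharpoonup\tfrac12$ weakly against $L^1$ weights, proved by expanding $\psi$ in Fourier series (or by a direct partition-of-$[\delta,1]$-into-intervals-of-length-$1/r_n$ estimate using uniform continuity of $g$).

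The substantive step is the second term, $\int_\delta^1\psi(r_nx+f_n(x))g(x)\,dx\to\tfrac12\int_\delta^1 g(x)\,dx$; this is where the main obstacle lies, because $\psi$ is discontinuous and its argument is perturbed by the variable, merely measurable quantity $f_n(x)$. Here I would exploit that on $[\delta,1]$ the functions $f_n$ are uniformly close to the \emph{Riemann integrable} function $f$: approximate $f$ above and below by step functions $s^-\le f\le s^+$ with $\int_\delta^1(s^+-s^-)<\eps$, so that for $n$ large $s^-(x)-\eps\le f_n(x)\le s^+(x)+\eps$ on $[\delta,1]$; on each interval where $s^\pm$ is constant, $\psi(r_nx+c)$ is again a shifted sawtooth in $x$ and the first-term analysis applies to give weak limit $\tfrac12$, and the $O(\eps)$ discrepancy between using $f_n$ and the step function is absorbed by a monotonicity/total-variation bound on $\int\psi(r_nx+t)g(x)\,dx$ as a function of the shift $t$ (uniformly in $r_n$, since changing $t$ by $\eta$ changes $\psi(r_nx+t)$ on a set of measure $O(\eta)$ per period). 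Combining, $\int_\delta^1\big(\psi(r_nx)-\psi(r_nx+f_n(x))\big)g\,dx\to\tfrac12\int_\delta^1 g-\tfrac12\int_\delta^1 g=0$ up to $O(\eps)$, and letting $\eps\to0$ (then $\delta\to0$) finishes the proof. The delicate point to get right is making the "shift by $f_n$" rigorous for a discontinuous integrand, which the Riemann-integrability of the limit $f$ is precisely designed to enable.
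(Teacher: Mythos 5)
Your proposal is correct in substance, but it takes a genuinely different route from the paper. You decompose $\widetilde f_n=f_n+\{r_nx\}-\{r_nx+f_n(x)\}$ and reduce everything to the equidistribution statement that both sawtooth terms converge weakly to the constant $\tfrac12$ against the weight, handling the variable shift $f_n(x)$ by a step-function approximation of the Riemann integrable limit $f$ plus a crossing-set estimate; the neighborhood of the origin is removed with the majorante, exactly as in the paper. The paper instead keeps the integer parts intact: after rescaling to $[0,r_n]$, splitting into unit periods and freezing $g$ at the points $k/r_n$ via its modulus of continuity, it exploits the exact averaging identity $\int_0^1[t+a]\,dt=a$ together with the monotonicity of $[\cdot]$ to sandwich $\int_0^1\bigl[t+f_n\bigl(\tfrac{t+k}{r_n}\bigr)\bigr]dt$ between $\inf f-\delta_n$ and $\sup f+\delta_n$ over each period, so that upper and lower Riemann sums of $fg$ appear directly and Riemann integrability finishes the proof (first under uniform convergence on $[0,1]$, then in general by the same $\varepsilon$-cutoff near $0$). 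Your route buys a transparent weak-convergence picture ($\{r_nx\}\rightharpoonup\tfrac12$), at the price of the shift-stability step, where your wording should be sharpened: $\{\cdot\}$ is not monotone, and continuity of $t\mapsto\int\psi(r_nx+t)g(x)\,dx$ in a \emph{constant} shift $t$ does not by itself control the variable shift $f_n(x)$; what actually closes the argument is the crossing-set measure bound applied interval by interval with $\eta_I=\mathrm{osc}_I(f)+\|f_n-f\|_{\infty,[\delta,1]}$ on each constancy interval $I$ of the step function, summed using $\sum_I\mathrm{osc}_I(f)\,|I|<\varepsilon$ (this is where Riemann integrability enters) together with an $O(1/r_n)$ error per interval --- note also that the pointwise discrepancy between $f_n$ and the step value is small only on intervals of small oscillation, so the ``$O(\varepsilon)$ discrepancy'' is an integrated, not uniform, statement. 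The paper's use of the monotone integer part and the identity $\int_0^1[t+a]\,dt=a$ avoids this bookkeeping entirely, which is what makes its argument shorter.
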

\begin{proof}

Without loss of generality  one may 
 assume that the function   $g$ in non-negative on $ [0,1]$.

A simple change of variables shows that 
\begin{align}
\int_0^1 & \widetilde f_n(x)g(x) dx =
\frac{1}{r_n}\int_0^{r_n}\left ( \left [t+ f_n\left ({r_n}^{-1}t\right )\right ]-\left [ t \right ]\right) g\left (r_n^{-1}t\right )dx \label{integ}
\\
&=\sum_{k=0}^{[r
_n]-1}\frac{1}{r_n}\int_k^{k+1}
\left (\left [t + f_n\left ({r_n}^{-1}t \right )\right ]-\left [ t \right ]\right )g\left ({r_n}^{-1}t\right )dt + \eps_n,
\nonumber \\&
=\sum_{k=0}^{[r
_n]-1}\frac{1}{r_n}\int_0^{1}
\left (\left [t+k + f_n\left (\frac{t+k} {r_n}\right )\right ]-\left [ t+k \right ]\right )g\left (\frac{t+k}{r_n}\right )dt + \eps_n,
\nonumber  \end{align}
where
 \begin{equation}\label{eps}\eps_n = \int_{\frac {[r_n]}{r_n}}^1\widetilde f_n(x)g(x) dx.
 \end{equation}

Taking into account that $[t+k] = [t]$ for any $t$ whenever $k$ is an
integer, and $[t] = 0$ for $t\in[0,1)$, we get
\begin{align}\label{integ}
\int_0^1 & \widetilde f_n(x)g(x) dx =\sum_{k=0}^{[r
_n]-1}\frac{1}{r_n}\int_0^{1} \left [t + f_n\left (\frac{t+k}
{r_n}\right )\right ]g\left (\frac{t+k}{r_n}\right )dt + \eps_n.
\end{align}

The uniform bound
$
|\widetilde f_n(x) |\le F(x)+1
$
(with $F$ from \eqref{major})
shows that 
\begin{equation*}
|\eps_n|\le \int_{\frac {[r_n]}{r_n}}^1(F(x)+1)g(x) dx , \quad n\in \bbN,
\end{equation*}
and therefore
\begin{equation}\label{nev}
\lim_{n\to \infty} \eps_n=0.
\end{equation}

Combining \eqref{integ} and  \eqref{nev} with the estimate
$$
\left |\int_0^1
\left [t + f_n\left (\frac{t+k}{r_n}\right )\right ]
\left (g\left (\frac{t+k}{r_n}\right )-g\left (\frac{k}{r_n}\right )\right )dt
\right |\le (\|f_n\|_\infty+1)\omega_g(r_n^{-1}),
$$
where $\omega_g(\cdot )$ stands for the modulus of continuity of
the function $g$,
one concludes that 
\begin{equation}
\lim_{n\to \infty}\int_0^1 \widetilde f_n(x)g(x)dx=\lim_{n\to
\infty}\sum_{k=0}^{[r_n]-1}\frac{g (k r_n^{-1})}{r_n} \int_0^1
\left [t + f_n\left (\frac{t+k}{r_n}\right )\right ]dt
\end{equation}
(provided that the limit in the RHS exists.) Therefore, in order to prove
assertion \eqref{techres} it suffices  to establish the equality
\begin{equation}\label{osnov}
\lim_{n\to \infty}\sum_{k=0}^{[r_n]-1}\frac{g (k r_n^{-1})}{r_n} \int_0^1
\left [t + f_n\left (\frac{t+k}{r_n}\right )\right ]dt = \int_0^1f(x)g(x) dx.
\end{equation}

To prove \eqref{osnov}, assume temporarily that the sequence of
functions $\{f_n\}_{n=1}^\infty$ converges to $f$ uniformly on the
closed interval $[0,1]$.

Since the  function $t\mapsto [t]$ is monotone,
from the inequality
$$
t+\inf_{\left [\frac{k}{r_n},\frac{k+1}{r_n}\right ]}f-\delta_n
\le t+f_n\left (\frac{t+k}{r_n}\right )\le t+
\sup_{\left [\frac{k}{r_n},\frac{k+1}{r_n}\right ]}f+\delta_n,
$$
$$
t\in [0,1], \quad
 k=0, 1 , \,...\,, [r_n]-1,
$$
where  $
\delta_n=\|f-f_n\|_\infty
$, one immediately obtains  that
\begin{equation}\label{ineqq}
\left [t+\inf_{\left [\frac{k}{r_n},\frac{k+1}{r_n}\right ]}f-\delta_n\right ]
\le \left [t+f_n\left (\frac{t+k}{r_n}\right )\right ]
\le \left [ t+
\sup_{\left [\frac{k}{r_n},\frac{k+1}{r_n}\right ]}f+\delta_n\right ],
\end{equation}
$$
t\in [0,1], \quad
 k=0, 1 , \,...\,, [r_n]-1.
$$

Integrating \eqref{ineqq} against $t$  from $0$ to $1$ and noticing that
$$
\int_0^1[ t + a]\,dt = a, \quad \text{ for all } \,\,\,a\in \bbR,
$$
and that $g$ is a non-negative function,
one obtains  the  following two-sided estimate
\begin{align}
\label{ineq} \sum_{k=0}^{[r_n]-1}\frac{g(\frac k {r_n})}{r_n}\left(\inf_{\left [\frac{k}{r_n},\frac{k+1}{r_n}\right ]}f - \delta_n\right)
\le &\sum_{k=0}^{[r_n]-1}\frac{g (\frac k {r_n})}{r_n} \int_0^1
\left [t + f_n\left (\frac{t+k}{r_n}\right )\right ]dt
\\ \notag &
\le
\sum_{k=0}^{[r_n]-1}\frac{g(\frac k {r_n})}{r_n}\left(
 \sup_{\left [\frac{k}{r_n},\frac{k+1}{r_n}\right ]}f +\delta_n \right).
\end{align}

Since $f$ is a Riemann integrable function by hypothesis and the
function $g$ is continuous on $[0,1]$, one concludes  that 
\begin{equation}\label{vto}
\lim_{n\to\infty}\frac{1}{r_n}
\sum_{k=0}^{[r_n]-1}g(kr_n^{-1})\inf_{\left [\frac{k}{r_n},\frac{k+1}{r_n}\right ]}f
=\lim_{n\to\infty}\frac{1}{r_n}
\sum_{k=0}^{[r_n]-1}g(kr_n^{-1})\sup_{\left [\frac{k}{r_n},\frac{k+1}{r_n}\right ]}f
\end{equation}
$$=\int_0^1f(x)g(x)dx.
$$
The additional assumption that the sequence
 $\{f_n\}_{n=1}^\infty$ converges to $f$ uniformly on the closed interval
 $[0,1]$ means that
$$ \lim_{n\to\infty}\delta_n=0, $$ which together with \eqref{ineq} and
\eqref{vto} proves \eqref{osnov}. This completes the proof of
\eqref{techres} (provided that $\{f_n\}_{n=1}^\infty$ converges to $f$ uniformly on the
 $[0,1]$).

To remove the extra assumption, we proceed as follows.

Given $0<\varepsilon <1$, one gets the inequality  
\begin{equation}\label{vare}
\overline{ \lim_{n\to \infty}}\left |\int_0^1 \widetilde f_n(x)g(x)dx-
\int_0^1  f(x)g(x)dx\right |
\end{equation}
$$
\le \overline{ \lim_{n\to \infty}}\left |\int_\varepsilon ^1 \widetilde f_n(x)g(x)dx-
\int_\varepsilon^1  f(x)g(x)dx\right |
$$
$$
+ \int_0^\varepsilon (2F(x)+1)g(x)dx.
$$
By hypothesis the sequence $\{f_n\}_{n=1}^\infty$ converges uniformly on 
the interval $(\varepsilon , 1]$ and therefore by the first part of the proof
one concludes that 
\begin{equation}\label{comb}
  \lim_{n\to \infty}\left |\int_\varepsilon ^1 \widetilde f_n(x)g(x)dx-
\int_\varepsilon^1  f(x)g(x)dx\right |=0.
\end{equation}

Combining \eqref{vare} and \eqref{comb} one obtains the inequality
\begin{equation}\label{malo}
\overline{ \lim_{n\to \infty}}\left |\int_0^1 \widetilde f_n(x)g(x)dx-
\int_0^1  f(x)g(x)dx\right |\le \int_0^\varepsilon (2F(x)+1)|g(x)|dx. 
\end{equation}
By the second inequality in \eqref{techres}, 
the right hand side  of \eqref{malo}
can be made arbitrary small
by an appropriate choice of $\varepsilon$ and hence 
$$
 \lim_{n\to \infty}\left |\int_0^1 \widetilde f_n(x)g(x)dx-
\int_0^1  f(x)g(x)dx\right |=0
$$
which completes the proof of the lemma.
\end{proof}

\begin{corollary}                                                                                                                   \label{ergodic_cor}
Let $h$ be a real-valued measurable bounded measurable function on $[0, \infty)$. Suppose that
the limit
$$
A=\lim_{x\to \infty}h(x)
$$
exists.
Then
\begin{equation}\label{erglim}
\lim_{r\to \infty}\frac{1}{r}\int_0^r ([x + h(x)]-[x])\,dx = A.
\end{equation}
\end{corollary}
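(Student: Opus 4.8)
The plan is to derive Corollary~\ref{ergodic_cor} directly from Lemma~\ref{main_conv_res} by an appropriate rescaling of the interval $[0,\infty)$ to $[0,1]$. Fix a sequence $r_n\to\infty$; it suffices to prove \eqref{erglim} along an arbitrary such sequence. Setting $x = r_n y$ with $y\in[0,1]$, a change of variables gives
\begin{equation*}
\frac1r\int_0^{r_n}\bigl([x+h(x)]-[x]\bigr)\,dx = \int_0^1\bigl([r_n y + h(r_n y)]-[r_n y]\bigr)\,dy,
\end{equation*}
so the goal is to show the right-hand side converges to $A$. This is exactly $\int_0^1 \widetilde f_n(y)\,dy$ in the notation of Lemma~\ref{main_conv_res}, with $g\equiv 1$ and $f_n(y) = h(r_n y)$.

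The remaining work is to check that the hypotheses of Lemma~\ref{main_conv_res} (i.e.\ Hypothesis~\ref{h1}) hold for this choice. First I would take $f(y)\equiv A$, which is certainly Riemann integrable on $[0,1]$, and $g\equiv 1$, which is continuous. Since $h(x)\to A$ as $x\to\infty$, for any $\delta>0$ there is $T$ with $|h(x)-A|<\delta$ for $x\ge T$; hence for $n$ large enough that $r_n\varepsilon > T$ one has $\sup_{y\in[\varepsilon,1]}|f_n(y)-A| = \sup_{y\in[\varepsilon,1]}|h(r_n y)-A|<\delta$, which establishes both pointwise convergence of $f_n$ to $f$ on $(0,1]$ and uniform convergence on every compact subset of $(0,1]$ (in fact on every $[\varepsilon,1]$). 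The majorante is immediate: $h$ is bounded, say $|h|\le M$, so $|f_n(y)|\le M =: F(y)$ with $\int_0^1 F(y)|g(y)|\,dy = M<\infty$. Thus Hypothesis~\ref{h1} is satisfied and Lemma~\ref{main_conv_res} yields $\int_0^1\widetilde f_n(y)\,dy \to \int_0^1 A\,dy = A$, which is the claim.

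There is essentially no serious obstacle here; the only point requiring a small amount of care is that the convergence $h(r_n y)\to A$ is \emph{not} uniform on all of $[0,1]$ — it degenerates near $y=0$ because $r_n y$ stays bounded there — which is precisely why Lemma~\ref{main_conv_res} was formulated with uniform convergence only on compact subsets of $(0,1]$ together with a dominating function, rather than uniform convergence on $[0,1]$. I would simply remark that this is the reason the lemma is stated in that generality and that the hypothesis is met with room to spare in the present application. Since the limit in \eqref{erglim} does not depend on the chosen sequence $r_n\to\infty$, the corollary follows.
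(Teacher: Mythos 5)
Your proposal is correct and follows essentially the same route as the paper: rescale via $x=r_ny$, set $f_n(y)=h(r_ny)$, $f\equiv A$, $g\equiv 1$, verify Hypothesis~\ref{h1} (with the bound on $h$ as the majorante), and apply Lemma~\ref{main_conv_res} along an arbitrary sequence $r_n\to\infty$. Your explicit verification of the uniform convergence on compact subsets of $(0,1]$ is a detail the paper leaves implicit, but the argument is the same.
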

\begin{proof}
We will prove that convergence \eqref{erglim} holds as $r$
approaches infinity along an arbitrary sequence
$\{r_n\}_{n=1}^{\infty} $ of positive numbers such that 
$
\lim_{n\to \infty}{r_n}=\infty
$
.

Introduce the sequence of
functions $\{f_n\}_{n=1}^\infty$  by
$$
f_n(x)=h(r_nx), \quad x \in [0,1].
$$ The sequence    $\{f_n\}_{n=1}^\infty$ converges pointwise to a constant function $f$ given by
$$A=\lim_{x\to \infty}h(x),
$$
and, moreover, the convergence is uniform on every compact set of
the semi-open interval $(0,1]$. Since, by hypothesis, the function
$h$ is bounded, so is the sequence $ \{f_n\}_{n=1}^\infty$ and
hence one can apply Lemma \ref{main_conv_res} to conclude that
\begin{equation}\label{momo}
  \lim_{n\to \infty} \int_0^1 ([r_nx + f_n(x)]-[r_nx])\,dx=\int_0^1A\,dx =A.
\end{equation}
By a  change of variables one  gets
$$
\frac{1}{r_n}\int_0^{r_n} ([x + h(x)]-[x])\,dx=\int_0^1 ([r_nx +
f_n(x)]-[r_nx])\,dx,
$$
which together with \eqref{momo} proves that
$$
\lim_{n\to \infty}\frac{1}{r_n}\int_0^{r_n} ([x + h(x)]-[x])\,dx = A
$$
and hence 
\eqref{erglim} holds, since 
  $\{r_n\}_{n=1}^{\infty} $ is an arbitrary sequence.
\end{proof}

\section{Proof of Theorem 1}																					\label{proof}

\begin{proof}
As it easily follows from the trace formula \eqref{trace_formula},
the  value of the spectral shift function $\xi (\la)$  on the
negative semi-axis  associated with the pair $(H_0,H)$ of the
(Dirichlet) Schr\"odinger operators is directly linked to the
number of negative eigenvalues of the operator $H$ that are
smaller than $\la$,
\begin{equation}                                                                                       \label{xi_negative}
\xi(\lambda)=-N(\lambda), \quad \lambda<0.
\end{equation}

Analogously,
 the spectral shift function $\xi^r$ associated with the finite-interval Schr\"odinger
operators $(H_0^r, H^r)$ on the negative semi-axis can  be
computed via the eigenvalue counting
 function  $N^r(\la)$  for the Schr\"odinger operator $H^r$
(cf. \cite{Kr}),
\begin{equation}\label{negspec}
\xi^r( \la) =  - N^r(\la),\quad \lambda<0.
\end{equation}

For $\lambda \ge 0$, the function $\xi$ admits the representation
in terms of the phase shift $\de$, associated with the potential
$V$ (see, e.g., \cite{BiKr}, \cite{BiYa},\cite{BuFa}, \cite{Ya}),
\begin{equation}
\label{xi_delta} \xi(\la) =  - \pi^{-1}\de(\sqrt{\la}), \quad \la
\ge 0.
\end{equation}
On the other hand, on the positive semi-axis the function $\xi^r$
can be represented as
\begin{equation} \label{xi_deltar}
\xi^r( \la) =    \left[ \pi^{-1}r\sqrt{\la} \right]
-\left[ \pi^{-1}r\sqrt{\la} +\pi^{-1}
 \de^r(
\sqrt{\la})\right], \quad \la \ge 0,\quad r >0,
\end{equation}
where $\de^r$ is the phase shift associated with the cut off potential $V^r$
given by
\begin{equation}\label{V_r_formula}
V^r(x) = \begin{cases}V(x), \quad &0\le x \le r,\\
0, \quad &x > r.
  \end{cases}\end{equation}

To prove \eqref{xi_deltar}  one observes that 
\begin{equation}                                                                               \label{xi_counting}
\xi^r( \la) = N_0^r(\lambda) - N^r(\la),\quad \lambda \ge 0,\quad
r >0,
\end{equation}
where $N_0^r$ is the eigenvalue counting function
for the Schr\"odinger operator $H^r_0$ on the finite interval.

Since 
$$
N_0^r(\lambda)=\left[ \pi^{-1}r\sqrt{\la} \right], \quad \lambda\ge 0,
$$
and 
$N^r$ can be represented as
\begin{equation}\label{pastur}
N^r(\la)=\left[ \pi^{-1}r\sqrt{\la} +\pi^{-1}
 \de^r\big(
\sqrt{\la}\big)\right], \quad \lambda \ge 0,
\end{equation}
(by the well known counting principle  that is a direct  
 consequence of the Sturm oscillation theorem
 (see, e.g., \cite[Ch. II, Sec. 6]{PaFi})), one gets \eqref{xi_deltar}.

In particular, since the counting functions  $N$, $N^r$, and $N_0^r$ are continuous from the left and
under hypothesis \eqref{first_moment} the phase shift $\de$ is continuous on $[0, \infty)$, equations
\eqref{xi_negative}, \eqref{xi_delta} and
\eqref{xi_deltar}, \eqref{xi_counting}
determine the 
left-continuous representatives for the spectral shift functions 
 $\xi$ and $\xi^r$, respectively.

To prove  the first assertion \eqref{xi_weak_conv} of the theorem we proceed as follows.

Given a continuous function $g$
with compact support, one splits the left hand side of \eqref{xi_weak_conv}
into two parts
$$
\int_{-\infty}^\infty \xi^r( \la)g(\la)d\la = I_r+II_r ,\quad r
>0,
$$
where
$$
I_r=
\int_{-\infty}^0 \xi^r( \la)g(\la)d\la
\quad \text{ and }  \quad
II_r =\int_{0}^\infty \xi^r( \la)g(\la)d\la.
$$

First we prove that
\begin{equation}\label{limI}
\lim_{r\to \infty}I_r=
\int_{-\infty}^0 \xi( \la)g(\la)d\la.
\end{equation}

Assume that the half-line   Schr\"odinger operator $H$
 has $m$, $m\ge 0$, negative eigenvalues denoted by
$$\lambda_1(H)< \lambda_2(H) < ... <\lambda_m(H).$$

Then in accordance with a result in \cite{BEWZ}, there exists a
$r_0$ such that for all $r>r_0$ the operator $H^r$ has  $m^r$, $m^r\ge m$,
negative eigenvalues
 $$\lambda_1(H^r)< \lambda_2(H^r) < ... <\lambda_{m^r}(H^r),\quad r >0,$$
and, in addition, 
\begin{equation}\label{limm}
\lim_{r\to \infty } \lambda_k(H^r)=\lambda_k(H), \quad k=1, 2, \, ..., \, m.
\end{equation}
Moreover, for any $\varepsilon >0$ there exists $r(\varepsilon)$ such that
$$
-\varepsilon < \lambda_k(H^r)<0, \quad k=m+1, ..., m^r, \quad r>r(\varepsilon).
$$

Taking into account \eqref{negspec}, from \eqref{limm} one gets that
\begin{equation}                                        \label{xi_negative_conv}
\lim_{r\rightarrow\infty}\xi^r( \la) = \xi(\la), \quad \la \in (-\infty, 0)\setminus \spec_{d}(H),
\end{equation}
where $\spec_{d}(H)$ denotes the discrete (negative) spectrum of the operator $H$.

To deduce \eqref{limI} from \eqref{xi_negative_conv} it suffices to apply a Bargmann-type estimate
 \cite{SCHM} that provides an upper bound for
 the number $m^r$ of the negative eigenvalues of the operator $H^r$ as follows
\begin{equation}\label{barg}
m^r \le \int_0^r x\,V_-(x)\,dx\le \int_0^\infty x\,V_-(x)\,dx<\infty, \quad r >0,
\end{equation}
where $V_-$ stands for  the negative part of the potential
$$
V_-(x)=\begin{cases}
-V(x), &V(x)<0\\
0, &\text{otherwise.}
\end{cases}
$$
Indeed, since
$$
|\xi^r(\lambda)|=N^r(\lambda)\le m^r\le 
\int_0^\infty x\,V_-(x)\,dx\le 
\int_0^\infty(1+ x)|V(x)|\,dx<\infty,\quad \lambda <0,$$
equality
 \eqref{limI} follows from  
 \eqref{xi_negative_conv} by  the dominated convergence theorem.

Next  we prove that
\begin{equation}\label{limII}
\lim_{r\to \infty}II_{r}=
\int^{\infty}_0 \xi( \la)g(\la)d\la.
\end{equation}

To get \eqref{limII} it suffices to show that  for every
sequence $\{r_n\}_{n=1}^\infty $ of non-negative numbers $r_n$
 such that $\lim_{n\to\infty}r_n = \infty$ the equality
 \begin{equation}\label{limIIa}
\lim_{n\to \infty}\int^{\infty}_0 \xi^{r_n}( \la)g(\la)d\la=
\int^{\infty}_0 \xi( \la)g(\la)d\la
\end{equation}
holds.

From \eqref{xi_deltar} one derives that
$$
\int_0^\infty \xi^{r}(\lambda) g(\lambda)d\lambda=
\int_0^\infty\left (\left[ \pi^{-1}r\sqrt{\la} \right]+\left[ \pi^{-1}{r}\sqrt{\la} -\pi^{-1}
 \de^{r}(
\sqrt{\la})\right]\right )g(\lambda)d\lambda,\quad r >0,
$$
 and therefore, after a change of variables, one obtains that
$$
\int_0^\infty \xi^{r_n}(\lambda) g(\lambda)d\lambda= \int_0^1\left
(\left[ {t_n}\la \right]-\left[ {t_n}\la +f_n( \la)\right]\right )
\widehat g(\lambda)d\lambda,\quad n\in\N,
$$
where
$
{t_n}= \pi^{-1}{r_n}$,  $
\widehat g(\lambda)=2\la g(\lambda^2)$,  $ \la \ge 0
$,  and  the sequence of functions $\{ f_n\}_{n=1}^\infty$ is given by
\begin{equation}\label{seq}
f_n(\lambda)=\pi^{-1}
 \de^{r_n\pi}(\lambda),\quad n\in\N.
\end{equation}

Without loss of generality (by rescaling) one may assume that
$g(\la)=0$ for $\la >1$ and hence to prove \eqref{limIIa} it
remains to check that the sequence of functions \eqref{seq}
 satisfies the hypotheses of Lemma \ref{main_conv_res} with
$$
f(\lambda)=\pi^{-1}
 \de(\lambda), \quad \la >0.
$$

Indeed,
from the phase equation (\cite{Calo}, p.~11, Eq.~(13);
\cite{BrGe})
\begin{equation}                            \label{delta_derivative_r}
\frac{d}{dr}\de^r( k) = -k^{-1} V(r) \sin^2\left(kr + \de^{r}(
k)\right) , \quad  k > 0, \quad r > 0,
\end{equation}
one easily concludes that
\begin{equation}\label{fazy}
\abs{\de^r( \sqrt{\la}) - \de(\sqrt{\la})}
 \le \frac{1}{\sqrt{\la}}\int_r^\infty \abs{ V(r')} dr', \quad \la >0,\quad r >0,
\end{equation}
 and therefore
\begin{equation}\label{vageq}
\abs{f_n( \la) - f({\la})}
 \le \frac{ 1}{\pi\la}\int_{\pi r_n}^\infty \abs{ V(r')} dr', \quad \la >0,\quad n\in\N.
\end{equation}
which proves that
\begin{equation}\label{eins}
\lim_{n\to \infty} f_n(\lambda)=f(\lambda), \quad \lambda >0,
\end{equation}
and that the convergence in \eqref{eins} takes place  uniformly
 on every compact subset of the semi-open interval $(0, 1]$.

Moreover, the following bound
\begin{equation}\label{nerav}
|f_n( \la)| \le \sup_{\lambda>0} |f({\la})|+
  \frac{1}{\pi \la}\int_{0}^\infty \abs{ V(r')} dr',\quad
  \lambda>0,\quad n\in\N,
\end{equation}
holds  and therefore the sequence \eqref{seq} has a $\widehat g$-integrable majorante.

Finally, under the short range hypothesis \eqref{first_moment} the
phase shift $\delta(\lambda)$ is a continuous function on
$(0,\infty)$ and by the Levinson Theorem,
\begin{equation}\label{lev}
\lim_{\lambda\downarrow 0}\delta(\sqrt{\la}) =\pi m, \quad m\in\Z_+,
\end{equation}
if there is no zero-energy resonance
and
$$
\lim_{\lambda\downarrow 0}\delta(\sqrt{\la}) = \pi \left
(m+\frac{1}{2}\right ), \quad m\in\Z_+, \quad \text{otherwise.}
$$
Therefore, the function $f$ is continuous on $[0,1]$ and hence it
is Riemann integrable.

Now one can apply Lemma \ref{main_conv_res} to  conclude that
\begin{align*}
\lim_{n\to \infty}\int_0^\infty \xi^{r_n\pi}(\lambda)g(\lambda)d\lambda
&=-\int_0^1 f(\lambda) \widehat  g(\lambda)d\lambda
=-\frac{1}{\pi}\int_0^1 \delta(\lambda) 2\la g(\la^2)d \la
\\
&=-\frac{1}{\pi}\int_0^1 \delta(\sqrt{\lambda})  g(\la)d \la
=\int_0^\infty \xi(\la)g(\la) d\la,
\end{align*}
which proves \eqref{limIIa}.

The proof of \eqref{xi_weak_conv} is complete.

Now, we will prove the second statement \eqref{main_equation_1} of the theorem.

 First we remark that equation~\eqref{xi_negative_conv} immediately
implies \eqref{main_equation_1} for $\la < 0,$ $\la\notin \spec_d(H)$.

In order to prove convergence~\eqref{main_equation_1} for a fixed $\la > 0$,
we again use equation~\eqref{xi_deltar} to obtain
\begin{equation*}
\frac 1 r \int_0^r \xi^r(\la)\,dr = \frac 1 r \int_0^r \left[ \pi^{-1}r\sqrt{\la} \right]
-\left[ \pi^{-1}r\sqrt{\la} +\pi^{-1}
 \de^r(
\sqrt{\la})\right]\,dr,\quad\la> 0,\quad r >0.
\end{equation*}
After the change of variables $x = \pi^{-1}r\sqrt{\la}$, we get
\begin{equation}                                                                                                                                    \label{vspomogat_eq}
\frac 1 r\int_0^r \xi^r(\la)\,dr = \frac \pi {\sqrt{\la}r}\int_0^{\frac {r\sqrt{\la}}{\pi}} \left (\left[ x\right]
-\left[ x +\pi^{-1}
 \de^{\pi x/\sqrt{\la}}(
\sqrt{\la})\right] \right )\,dx,\;\la> 0,\,r > 0.
\end{equation}
From \eqref{fazy} it follows that  the function $h$
given by
$$
h(x)=\de^{\pi x/\sqrt{\la}}( \sqrt{\la}),\quad x > 0,\quad \la >
0,
$$
is a bounded function and that
$$
\lim_{x\to \infty}h(x)=\de(\sqrt{\la}),\quad \la > 0.
$$
Hence, applying
Corollary~\ref{ergodic_cor}  one concludes that
\begin{equation}                                                                                                                                    \label{delta_conv_1}
\lim_{r\to \infty}\frac{\pi}{\sqrt{\la}r}\int_0^{\frac {r\sqrt{\la}}{\pi}} \left(\left[ x \right]
-\left[ x+\pi^{-1}
 \de^{\pi x/\sqrt{\la}}(
\sqrt{\la})\right]\right)\,dx = -\de(\sqrt{\la}),\; \la > 0,
\end{equation}
and, therefore,
\begin{equation*}
\lim_{r\to \infty}\frac 1 r\int_0^r \xi^r(\la)\,dr = -\pi^{-1}\de(\sqrt{\la}),\quad \la > 0,
\end{equation*}
which together with \eqref{xi_delta} proves  \eqref{main_equation_1} for $\la > 0$.

Finally,  to prove \eqref{main_equation_1} for $\la=0$ in the case
of the absence of a zero energy resonance, notice that  the
corresponding Jost function $\mathcal{F}$ does not vanish on the
closed semi-interval $[0, \infty)$. Since
\begin{equation*}
\lim_{r \rightarrow \infty} \mathcal{F}^r(k) = \mathcal{F}(k), \quad k \ge 0,
\end{equation*}
it is obvious that $\mathcal{F}^r(k)$ is not zero for $k\in[0, \infty)$ for all $r$ large enough and hence
\begin{equation}                                                                                                                    \label{delta_conv_zero}
\lim_{r \rightarrow \infty} \de(r, 0) = \de(0)
\end{equation}
for $\de(r, k) = \arg(\mathcal{F}^r(k))$ and $\de(k) = \arg(\mathcal{F}(k))$.

Now equations \eqref{vspomogat_eq}, \eqref{delta_conv_zero}, and Corollary~\ref{ergodic_cor}
imply
\begin{equation*}
\lim_{r\to \infty}\frac 1 r\int_0^r \xi^r(0)\,dr = -\pi^{-1}\de(0).
\end{equation*}
By the Levinson Theorem (see  \eqref{lev}), the quantity $ \pi^{-1}\de(0)$ coincides with the number $N(0)$ of
(negative) eigenvalues of the Schr\"odinger operator $H$. Since
$$
\xi(0)=\lim_{\eps\downarrow 0}\xi (-\eps)=-N(0),
$$
one concludes that
$$
\lim_{r\to \infty}\frac 1 r\int_0^r \xi^r(0)\,dr =\xi(0)
$$
which completes the proof of
Theorem~\ref{main_theorem_0}.
\end{proof}

{\bf Acknowledgments} 
We would like to thank  Fritz Gesztesy and Vadim Kostrykin for useful discussions on this topic.

\end{document}